\numberwithin{equation}{section}
\newtheorem{theorem}{Theorem}
\newtheorem{remark}{Remark}
\newtheorem{proposition}{Proposition}
\title{A Linear Algebraic Truncation Algorithm with\\ A Posteriori Error Bounds for Computing Markov Chain Equilibrium Gradients}
\author{Saied Mahdian and Peter W. Glynn\\ Department of Management Science and Engineering,\\ Stanford University, CA, 94305, USA}
\date{}
\begin{document}

\maketitle

\begin{abstract}
The numerical computation of equilibrium reward gradients for Markov chains appears in many applications for example within the policy improvement step arising in connection with average reward stochastic dynamic programming. When the state space is large or infinite, one will typically need to truncate the state space in order to arrive at a numerically tractable formulation. In this paper, we derive the first computable a posteriori error bounds for equilibrium reward gradients that account for the error induced by the truncation. Our approach uses regeneration to express equilibrium quantities in terms of the expectations of cumulative rewards over regenerative cycles. Lyapunov functions are then used to bound the contributions to these cumulative rewards and their gradients from path excursions that take the chain outside the truncation set. Our numerical results indicate that our approach can provide highly accurate bounds with truncation sets of moderate size. We further extend our approach to Markov jump processes.

\end{abstract}

\section{Introduction}

Let $X = (X_n: n \geq 0)$ be a Markov chain with a large finite or infinite discrete state space $S$. Suppose that the dynamics of $X$ are affected by a scalar decision or control parameter $\theta$ taking values in some open interval $\Theta$. (Of course, the theory then trivially extends to vector decision parameters, since the computation of each partial derivative is a scalar problem.) Then, the one-step transition matrix $P(\theta) = (P(\theta,x,y): x,y \in S)$ depends on $\theta$. Given a reward function $r: S \to \mathbb{R}_+$, it is often of importance to analyze the equilibrium (or steady state) reward per unit time, given by 
\begin{align*}
   \alpha(\theta) = \sum_{x \in S} \pi(\theta,x) r(x),
\end{align*}
where $\pi(\theta) = (\pi(\theta,x): x \in S)$ is the equilibrium distribution of $X$. Such an equilibrium distribution exists uniquely when $X$ is irreducible and positive recurrent under $P(\theta)$. When $r$ is of mixed sign, we can apply our methodology to the positive and negative parts of $r$.

In many settings, it is also important to compute the derivative $\alpha'(\theta)$ with respect to $\theta$, assuming that it exists. Such derivatives appear naturally in the numerical optimization of Markov chain models and in the statistical analysis of Markov chains and their associated sensitivity analysis; see \cite{glynn1990likelihood} for a discussion. This gradient information also plays a key role within the policy improvement step found within algorithms designed to compute optimal policies for average reward Markov decision processes; see, for example, \cite{sutton2018reinforcement} for applications within the context of reinforcement learning. 

When stochastic simulation is used to compute performance measures, there is already a large literature on numerical computation of such derivatives; see, for example, \cite{glynn1987likelihood,pflug1992gradient,glasserman1992derivative,glynn1995likelihood}.  
On the other hand, when linear algebraic methods are used, \cite{golub1986using}  
describes the basic computation of the derivative. However, we are unaware of any literature that discusses the truncation issue that inevitably appears when $S$ is large or infinite. This paper introduces this problem and provides the first convergent truncation algorithm with computable rigorous a posteriori error bounds. 

The approach we follow in this paper extends the ideas used in \cite{zheng2024regenerationbased} and \cite{infanger2022new}   
to derive computable error bounds for the quantity $\alpha(\theta_0)$, when based on a suitable truncation approximation. In both papers, equilibrium quantities are expressed in terms of expectations involving cycles corresponding to some fixed finite subset of $S$. \cite{zheng2024regenerationbased} uses a singleton $\{z\}$ as the fixed finite subset, whereas \cite{infanger2022new} uses a more general finite subset within their algorithm and bounds. In this paper, we also use cycles defined in terms of a singleton $\{z\}$ as the starting point for our analysis, since it avoids the complications associated with needing to compute error bounds for the gradient of the equilibrium probabilities for the chain observed at return times to $K$, as would appear if we used the more general cycle structure associated with returns to $K$. In particular, the approach we adopt here involves selecting a return state $z \in S$, and letting $\tau(z) = \inf\{n\geq1: X_n = z \}$ be the first return time to $z$. If $\mathbb{E}^{\theta}(\cdot)$ is the expectation on the path-space of $X$ under which $X$ evolves according to the transition matrix $P(\theta)$ and $\mathbb{E}^{\theta}_x(\cdot) \overset{\Delta}{=} \mathbb{E}^{\theta}(\cdot| X_0 =x )$ for $x\in S$, it is well known from the theory of regeneration that $\alpha(\theta)$ can be expressed as  
\begin{align}
\alpha(\theta) = \frac{\mathbb{E}^{\theta}_z \sum_{j=0}^{\tau(z)-1} r(X_j)}{\mathbb{E}^{\theta}_z \tau(z)};
\label{eq:eqn1_1}
\end{align}
see, for example, \citet{asmussen2003applied}.
We then show how the numerator and denominator and their derivatives can be well approximated via a suitable truncation of the state space $S$. Furthermore, we are able to obtain a posteriori error bounds on the numerator and denominator of (\ref{eq:eqn1_1}), as well as their derivatives, thereby yielding computable error bounds on the derivative $\alpha'(\theta_0)$ for $\theta_0 \in \Theta$.  

The use of (\ref{eq:eqn1_1}), rather than working directly with the stationary equation $\pi(\theta) = \pi(\theta) P(\theta)$ (where $\pi(\theta)$ is encoded as a row vector), provides an expression for $\alpha(\theta)$ involving path \emph{excursions} from $z$ back to $z$ that allow one to use Lyapunov functions to numerically bound the path expectations that arise in computing the derivative.
 Thus, (\ref{eq:eqn1_1}) plays a key role in our approach. The resulting approximation to $\alpha'(\theta_0)$ is, in significant generality, convergent to $\alpha'(\theta_0)$ as the truncation set expands to the entire state space $S$; see Proposition \ref{prop1}.

\section{The Truncation Approximation to the Gradient}\label{sec:trunc_approx_grads}

Let $e: S \to \mathbb{R}_+$ be the function for which $e(x) = 1$ for $x \in S$ (the \emph{all one's} function). Also, for $f: S \to \mathbb{R}$ and $x \in S$, let 
\begin{align}
 w(\theta,x,f) = \mathbb{E}_x^{\theta} \sum_{j=0}^{\tau(z)-1} f(X_j). 
\label{eq:eqn2_1}
\end{align}
We note that the approach outlined in the Introduction for computing our truncation approximation to $\alpha'(\theta)$ involves computing approximations to $w(\theta,z,f)$ and $w'(\theta_0,z,f)$ for $f=r$ and $f=e$. Consequently, we show now how to approximate $w(\theta,z,f)$ and $w'(\theta_0,z,f)$ for generic $f$. We assume throughout this paper that $X$ is an irreducible recurrent Markov chain for $\theta \in \Theta$.

Let $A \subseteq S$ be a finite set for which $z$ is an element. Put $\kappa = A - \{z\}$. We write $P(\theta)$ and $f$ in partitioned form as  
\begin{align}
P(\theta) = \begin{blockarray}{cccc}
                 & \{z\} & \kappa & A^c \\
                 \begin{block}{c[ccc]}
                 \{z\} & P_{11}(\theta) & P_{1\kappa}(\theta) & P_{14}(\theta) \\
                 \kappa & P_{\kappa1}(\theta) & P_{\kappa\kappa}(\theta) & P_{\kappa4}(\theta) \\
                 A^c & P_{41}(\theta) & P_{4\kappa}(\theta) & P_{44}(\theta) \\
                 \end{block}
              \end{blockarray},
f = \begin{blockarray}{cc}
                     \\
                 \begin{block}{c[c]}
                 \{z\} & f_{1}  \\
                 \kappa & f_{\kappa}  \\
                 A^c & f_{4}  \\
                 \end{block}
              \end{blockarray}.
\end{align}
(We index $A^c$ as $4$ to be consistent with Section \ref{sec:computable_error_bounds}.)
If $T = \inf\{ n \geq 0: X_n \in A^c \}$, we can approximate $w(\theta,x,f)$ for $x \in A$ via  
\begin{align}
 \undertilde{w}(\theta,x,f) = \mathbb{E}_x^{\theta} \sum_{j=0}^{(\tau(z) \wedge T) -1} f(X_j).
\label{eq:eqn__2_2}
\end{align}
But
\begin{align}
 \undertilde{w}(\theta,x,f) = f(x) + \sum_{y \in \kappa}P(\theta,x,y)\undertilde{w}(\theta,y,f).
\label{eq:eqn2_4}
\end{align}
for $x \in \kappa$.
So, if $\undertilde{w}_{\kappa}(\theta,f) = (\undertilde{w}(\theta,x,f): x \in \kappa)$ is encoded as a column vector, (\ref{eq:eqn2_4}) implies that 
\begin{align*}
 \undertilde{w}_{\kappa}(\theta,f) = f_{\kappa} + P_{\kappa\kappa}(\theta) \undertilde{w}_{\kappa}(\theta,f)
\end{align*}
so that
\begin{align*}
 \undertilde{w}_{\kappa}(\theta,f) = \sum_{n=0}^{\infty} P^n_{\kappa\kappa}(\theta) f_{\kappa}.
\end{align*}
When $P(\theta)$ is irreducible, it follows that $P^n_{\kappa\kappa}(\theta) \to 0$ as $n \to \infty$ so that
\begin{align*}
 \undertilde{w}_{\kappa}(\theta,f) = (I - P_{\kappa\kappa}(\theta))^{-1} f_{\kappa};
\end{align*}
see, for example, \citet{kemeny1960finite}. So, we arrive at the approximation
\begin{align}
 \undertilde{w}(\theta,z,f) = f(z) + P_{1\kappa}(\theta) \mathcal{I}_{\kappa\kappa}(\theta) f_{\kappa},
\label{eq:eqn2_5}
\end{align}
to $w(\theta,z,f)$, where $\mathcal{I}_{\kappa\kappa}(\theta)= (I - P_{\kappa\kappa}(\theta) )^{-1} = \sum_{n=0}^{\infty} P^n_{\kappa\kappa}(\theta)$. 
The approximation to $w'(\theta_0,z,f)$ is therefore given by
\begin{align}
 \undertilde{w}'(\theta_0,z,f) =& P'_{1\kappa}(\theta_0) \mathcal{I}_{\kappa\kappa}(\theta_0) f_{\kappa} \nonumber \\
                                       & + P_{1\kappa}(\theta_0) \mathcal{I}_{\kappa\kappa}(\theta_0) P'_{\kappa\kappa}(\theta_0) \mathcal{I}_{\kappa\kappa}(\theta_0) f_{\kappa}.
\label{eq:eqn2_6}
\end{align}

We now prove that as the truncation set $A$ expands to the entire state space $S$, our approximations converge. In particular, suppose that $z \in A_1$ and that $A_1 \subseteq A_2 \subseteq A_3 \subseteq \ldots$ such that $\bigcup_{n=1}^{\infty} A_n = S$. Let $\undertilde{w}_n(\theta_0,z,f)$ and $\undertilde{w}'_n(\theta_0,z,f)$ be the truncation approximations corresponding to setting $A = A_n$. Under suitable regularity conditions, it is known that 
\begin{align*}
 w(\theta,z,f) = \mathbb{E}_z^{\theta_0} \sum_{j=0}^{\tau(z)-1}f(X_j) \prod_{k=0}^{j-1} \frac{P(\theta,X_k,X_{k+1})}{P(\theta_0,X_k,X_{k+1})}, 
\end{align*}
and
\begin{align}
 w'(\theta_0,z,f) &= \mathbb{E}_z^{\theta_0} \sum_{j=0}^{\tau(z)-1}f(X_j) \sum_{k=0}^{j-1} \frac{P'(\theta_0,X_k,X_{k+1})}{P(\theta_0,X_k,X_{k+1})} \nonumber \\
 &= \mathbb{E}_z^{\theta_0} \sum_{j=0}^{\tau(z)-1}  \frac{P'(\theta_0,X_j,X_{j+1})}{P(\theta_0,X_j,X_{j+1})} \sum_{k=j+1}^{\tau(z)-1} f(X_k), 
\label{eq:eqn2_7}
\end{align}
where
\begin{align}
 \mathbb{E}_z^{\theta_0} \sum_{j=0}^{\tau(z)-1} |f(X_j)| \sum_{k=0}^{j-1} \frac{|P'(\theta_0,X_k,X_{k+1})|}{P(\theta_0,X_k,X_{k+1})} < \infty; 
\label{eq:eqn2_8}
\end{align}
see \citet{glynn1995likelihood} and \citet{rhee2023lyapunov}. Put $|f| = (|f|(x): x \in S)$.

\begin{proposition}
\begin{enumerate}
  \item[(a)] If $w(\theta_0, z, |f|) < \infty$, then  $\undertilde{w}_n(\theta_0, z, f) \to w(\theta_0, z, f)$ as $n \to \infty$.
  \item[(b)] If (\ref{eq:eqn2_8}) holds, then $\undertilde{w}'_n(\theta_0, z, f) \to w'(\theta_0, z, f)$ as $n \to \infty$.
\end{enumerate}
\label{prop1}
\end{proposition}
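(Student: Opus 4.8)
The plan is to push both statements through the dominated convergence theorem, using probabilistic path-sum representations of $\undertilde{w}_n$ and $\undertilde{w}'_n$ together with a single pathwise fact about how the truncated horizon stabilizes. Write $T_n = \inf\{m\geq 0 : X_m \in A_n^c\}$ for the first exit time from $A_n$. Since $z\in A_n$ and the $A_n$ are increasing, the $T_n$ are nondecreasing in $n$; and since $X$ is recurrent, $\tau(z) < \infty$ almost surely, so fixing a sample path the states $X_0,\dots,X_{\tau(z)-1}$ visited before the return to $z$ are finite in number and hence all lie in $A_n$ once $n$ is large, which forces $T_n \geq \tau(z)$ and therefore $\tau(z)\wedge T_n = \tau(z)$ for all sufficiently large $n$ (depending on the path). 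In particular $\tau(z)\wedge T_n \uparrow \tau(z)$ almost surely.

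For part (a), the definition \eqref{eq:eqn__2_2} (equivalently the closed form \eqref{eq:eqn2_5}) gives $\undertilde{w}_n(\theta_0,z,f) = \mathbb{E}_z^{\theta_0}\sum_{j=0}^{(\tau(z)\wedge T_n)-1} f(X_j)$. By the stabilization just noted, $\sum_{j=0}^{(\tau(z)\wedge T_n)-1} f(X_j)\to\sum_{j=0}^{\tau(z)-1} f(X_j)$ almost surely, while uniformly in $n$ this partial sum is bounded in absolute value by $\sum_{j=0}^{\tau(z)-1}|f(X_j)|$, whose expectation is $w(\theta_0,z,|f|)<\infty$. Dominated convergence then yields $\undertilde{w}_n(\theta_0,z,f)\to\mathbb{E}_z^{\theta_0}\sum_{j=0}^{\tau(z)-1} f(X_j) = w(\theta_0,z,f)$, the limit being finite since $|w(\theta_0,z,f)|\leq w(\theta_0,z,|f|)$.

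For part (b), the first and main step is to derive the path-sum representation
\[
\undertilde{w}'_n(\theta_0,z,f) = \mathbb{E}_z^{\theta_0}\sum_{j=0}^{(\tau(z)\wedge T_n)-1}\frac{P'(\theta_0,X_j,X_{j+1})}{P(\theta_0,X_j,X_{j+1})}\sum_{k=j+1}^{(\tau(z)\wedge T_n)-1} f(X_k),
\]
the exact finite-horizon analogue of the second line of \eqref{eq:eqn2_7}. I would obtain this by expanding the two terms of \eqref{eq:eqn2_6} with the Neumann series $\mathcal{I}_{\kappa\kappa}(\theta_0)=\sum_{m\geq 0}P_{\kappa\kappa}^m(\theta_0)$ and collecting terms: each resulting summand is a path sum over excursions that start at $z$, stay in $\kappa$, carry a single derivative factor $P'$ at one step $j$, and end with an evaluation of $f$; rewriting that derivative factor as $P(\theta_0,X_j,X_{j+1})$ times the score $P'(\theta_0,X_j,X_{j+1})/P(\theta_0,X_j,X_{j+1})$ turns the summand into $\mathbb{E}_z^{\theta_0}$ of an indicator of $\{\tau(z)\wedge T_n > \text{(path length)}\}$ times that score times $f$ at the endpoint, and re-summing over the position $j$ of the derivative and over the endpoint gives the display. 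The rearrangement of this doubly indexed series is legitimate because $\kappa$ is finite, so every matrix involved has finite entries and the relevant Neumann series converge entrywise; the same finiteness lets Tonelli move the sums across $\mathbb{E}_z^{\theta_0}$. (Equivalently, one may differentiate at $\theta_0$ the change-of-measure identity $\undertilde{w}_n(\theta,z,f) = \mathbb{E}_z^{\theta_0}\sum_{j=0}^{(\tau(z)\wedge T_n)-1} f(X_j)\prod_{k=0}^{j-1}\frac{P(\theta,X_k,X_{k+1})}{P(\theta_0,X_k,X_{k+1})}$, the interchange of derivative, sum, and expectation being justified under the same regularity conditions invoked for \eqref{eq:eqn2_7}, since the truncated cycle is no longer than $\tau(z)$.)

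Once the representation is in hand, part (b) closes exactly like part (a): by the stabilization of $\tau(z)\wedge T_n$ the integrand converges almost surely to the integrand of $w'(\theta_0,z,f)$ in \eqref{eq:eqn2_7}, and its absolute value is bounded, uniformly in $n$, by $\sum_{j=0}^{\tau(z)-1}\frac{|P'(\theta_0,X_j,X_{j+1})|}{P(\theta_0,X_j,X_{j+1})}\sum_{k=j+1}^{\tau(z)-1}|f(X_k)|$, which upon interchanging the two finite sums equals $\sum_{j=0}^{\tau(z)-1}|f(X_j)|\sum_{k=0}^{j-1}\frac{|P'(\theta_0,X_k,X_{k+1})|}{P(\theta_0,X_k,X_{k+1})}$ and is therefore integrable by \eqref{eq:eqn2_8}. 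Dominated convergence gives $\undertilde{w}'_n(\theta_0,z,f)\to w'(\theta_0,z,f)$. The only genuinely delicate point in the whole argument is the bookkeeping in that first step of part (b) --- translating the linear-algebraic formula \eqref{eq:eqn2_6} into the double path-sum and justifying the rearrangement of the Neumann expansions --- after which everything reduces to the elementary observation that along every sample path the truncated cycle $\tau(z)\wedge T_n$ is eventually equal to $\tau(z)$.
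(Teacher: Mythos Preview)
Your proof is correct and follows essentially the same approach as the paper's: define $T_n$, observe that $\tau(z)\wedge T_n$ eventually equals $\tau(z)$ pathwise, and then apply dominated convergence with the absolute-value full-cycle sums as dominating variables. The only cosmetic difference is that for part~(b) you work with the second (Fubini-swapped) form of the likelihood-ratio identity in \eqref{eq:eqn2_7} and sketch the derivation of the truncated path-sum representation from \eqref{eq:eqn2_6}, whereas the paper uses the first form and simply asserts the representation with ``it is easily verified that.''
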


\begin{proof}
 Let $T_n = \inf\{ k\geq0: X_k \in A^c_n  \}$, and note that $T_n \to \infty$. Also, (\ref{eq:eqn__2_2}) implies that  
\begin{align*}
 \undertilde{w}_n(\theta,z,f) = \mathbb{E}_z^{\theta} \sum_{j=0}^{(\tau(z) \wedge T_n) -1} f(X_j).
\end{align*}
But $\sum_{j=0}^{(\tau(z) \wedge T_n) -1} f(X_j) \to \sum_{j=0}^{\tau(z) -1} f(X_j)$ a.s. as $n \to \infty$, and 
\begin{align*}
\left| \sum_{j=0}^{(\tau(z) \wedge T_n) -1} f(X_j)\right| \leq \sum_{j=0}^{\tau(z) -1} |f(X_j)|,
\end{align*}
so that the Dominated Convergence Theorem yields (a) in the presence of $w(\theta_0, z, |f|) < \infty$.  

As for (b), it is easily verified that  
\begin{align*}
 \undertilde{w}'(\theta_0,z,f) = \mathbb{E}_z^{\theta_0} \sum_{j=0}^{(\tau(z)\wedge T)-1}f(X_j) \sum_{k=0}^{j-1} \frac{P'(\theta_0,X_k,X_{k+1})}{P(\theta_0,X_k,X_{k+1})}, 
\end{align*}
so that
\begin{align*}
 \undertilde{w}'_n(\theta_0,z,f) = \mathbb{E}_z^{\theta_0} \sum_{j=0}^{(\tau(z)\wedge T_n)-1}f(X_j) \sum_{k=0}^{j-1} \frac{P'(\theta_0,X_k,X_{k+1})}{P(\theta_0,X_k,X_{k+1})}. 
\end{align*}
Again,
\begin{align*}
  \sum_{j=0}^{(\tau(z)\wedge T_n)-1}f(X_j) \sum_{k=0}^{j-1} \frac{P'(\theta_0,X_k,X_{k+1})}{P(\theta_0,X_k,X_{k+1})} \to \sum_{j=0}^{\tau(z)-1}f(X_j) \sum_{k=0}^{j-1} \frac{P'(\theta_0,X_k,X_{k+1})}{P(\theta_0,X_k,X_{k+1})}
\end{align*}
as $n \to \infty$. In view of (\ref{eq:eqn2_8}), the Dominated Convergence Theorem yields (b). 
\end{proof}

\section{A General Lyapunov Bound for Solutions to Non-negative Linear Systems}\label{sec:general_bound_lyapunov}

To develop a posteriori error bounds on $\undertilde{w}(\theta_0,z,f)$ and $\undertilde{w}'(\theta_0,z,f)$, it is useful to provide some general theory in this section on bounds for solutions to linear systems in which the coefficient matrix has a specific non-negative structure. Put $\Lambda = S - \{z\}$, $A' = A - \{z\}$, and let $G = (G(x,y): x,y \in \Lambda)$ be a non-negative matrix and $h = (h(x): x \in  \Lambda)$ be a (column) vector. Suppose $K' \subseteq A' \subseteq \Lambda$ with $|A'| < \infty$, set $\tilde{A} = A' - K'$, and write $G$ and $h$ in partitioned form as
\begin{align*}
G = \begin{blockarray}{cccc}
                 & K' & \tilde{A} & A^c \\
                 \begin{block}{c[ccc]}
                 K' & G_{22} & G_{23} & G_{24} \\
                 \tilde{A} & G_{32} & G_{33} & G_{34} \\
                 A^c & G_{42} & G_{43} & G_{44} \\
                 \end{block}
              \end{blockarray},
h = \begin{blockarray}{cc}
                     \\
                 \begin{block}{c[c]}
                 K' & h_2  \\
                 \tilde{A} & h_3  \\
                 A^c & h_4  \\
                 \end{block}
              \end{blockarray}.
\end{align*}
(We index $K',\tilde{A}$, and $A^c$ as $2,3$ and $4$ in order to be consistent with the discussion of Section \ref{sec:computable_error_bounds}.)
The theory in this section addresses the solution $u^T=[u^T_2,u^T_3,u^T_4]$ to
\begin{align*}
  u = h + G u,
\end{align*}
as given by
\begin{align}
  u^* = \sum_{n=0}^{\infty} G^n h. 
\label{eq:eqn3__1}
\end{align}

Let $\kappa = K' \cup \tilde{A}$ and consider the partitioned submatrix and vector given by
\begin{align*}
G_{\kappa\kappa} = \begin{bmatrix} 
                 G_{22} & G_{23} \\
                 G_{32} & G_{33} 
              \end{bmatrix},
h_{\kappa} = \begin{bmatrix} 
                 h_{2} \\
                 h_{3} 
              \end{bmatrix}.  
\end{align*}
If $G^n_{\kappa\kappa} \to 0$ as $n \to \infty$, $\mathcal{I}_{\kappa\kappa} = (I - G_{\kappa\kappa})^{-1}$ exists; see \citet{kemeny1960finite}. Write $\mathcal{I}_{\kappa\kappa}$ in row-partitioned form as
\begin{align*}
\mathcal{I}_{\kappa\kappa} =   
 \begin{blockarray}{cc}
                     \\
                 \begin{block}{c[c]}
                 K' & \mathcal{I}_{2}  \\
                 \tilde{A} & \mathcal{I}_3  \\
                 \end{block}
              \end{blockarray}.
\end{align*}
Then, a suitable finite-dimensional approximation $\undertilde{u}^T = [\undertilde{u}^T_2, \undertilde{u}^T_3,\undertilde{u}^T_4]$ to $u^*$ is given by
\begin{align*}
\undertilde{u} =   
 \begin{blockarray}{cc}
                     \\
                 \begin{block}{c[c]}
                 K' & \mathcal{I}_{2} h_{\kappa}  \\
                 \tilde{A} & \mathcal{I}_3 h_{\kappa} \\
                 A^c & 0  \\
                 \end{block}
              \end{blockarray}.
\end{align*}
Our goal, in the remainder of this section, is to develop a suitable error bound on $\undertilde{u}$ as an approximation to $u^*$.

To accomplish this, we let $\beta = \tilde{A} \cup A^c = \Lambda - K'$ and set
\begin{align*}
G_{\beta\beta} = \begin{bmatrix} 
                 G_{33} & G_{34} \\
                 G_{43} & G_{44} 
              \end{bmatrix},  
h_{\beta} = \begin{bmatrix} 
                 h_{3} \\
                 h_{4} 
              \end{bmatrix}.  
\end{align*}
Let $w = (w(x): x \in K')$ be a positive function on $K'$. The function $w$ induces a norm on functions/column vectors $g =( g(x): x \in K')$ with domain $K'$, via $\| g \|_w = \max\{|g(x)|/w(x): x \in K' \}$. We now assume existence of non-negative functions $v$ and $\nu$ (known as \emph{Lyapunov functions}) such that:
\begin{align}
G_{\beta\beta} v &\leq v - |h_{\beta}|, \nonumber\\
G_{\beta\beta} \nu &\leq \nu - G_{\beta2}w.
\label{eq:eqn3__2}
\end{align}
In the presence of (\ref{eq:eqn3__2}), it is well known that
\begin{align}
\sum_{n=0}^{\infty} G^n_{\beta\beta} |h_{\beta}| \leq v, \nonumber\\
\sum_{n=0}^{\infty} G^n_{\beta\beta} G_{\beta2} w \leq \nu; 
\label{eq:eqn3__3}
\end{align}
see, for example, \citet{infanger2024posteriori}.

\begin{remark}
If $G$ is substochastic, then $\sum_{n=1}^{\infty} G^n_{\beta\beta} G_{\beta2}$ is substochastic. Then, if we let $e_2 = (e_2(x): x \in K')$ and $e_{\beta} = (e_{\beta}: x \in \beta)$ be the \emph{all ones} vectors on $K'$ and $\beta$ respectively, we can set $w = e_2$ and $\nu = e_{\beta}$. With these choices, only the Lyapunov function $v$ needs to be constructed.
\label{rmk_1}
\end{remark}

Put $\epsilon = u^* - \undertilde{u}$ and let $|\epsilon| = (|\epsilon(x)|: x \in \Lambda )$. Write $|\epsilon|$ in partitioned form as $|\epsilon|^T = [|\epsilon_2|^T,|\epsilon_3|^T,|\epsilon_4|^T]$.

\begin{theorem}
Suppose $G^n_{\kappa\kappa} \to 0$ as $n \to \infty$, and assume the existence of $v^T = [v^T_3, v^T_4]$ and $\nu^T = [\nu^T_3, \nu^T_4]$ satisfying (\ref{eq:eqn3__2}). If $\| \mathcal{I}_2 G_{24} \nu_4 \|_w < 1$, then 
\begin{align*}
 |\epsilon_i| \leq \mathcal{I}_i G_{i4} v_4 + \mathcal{I}_i G_{i4} \nu_4 \cdot m(h).
\end{align*}
for $i = 2,3$ and
\begin{align*}
 |\epsilon_4| \leq v_4 + \nu_4 \cdot m(h),
\end{align*}
where 
\begin{align*}
  m(h) = \frac{\| \mathcal{I}_2 G_{24}v_4 + \mathcal{I}_2 |h_{2}|  \|_w}{1 - \|\mathcal{I}_2 G_{24} \nu_4  \|_w }.
\end{align*}

\label{thm1}
\end{theorem}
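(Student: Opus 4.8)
The plan is to funnel the whole truncation error into the single tail block $\epsilon_{4}$ (which equals $u^{*}_{4}$, since $\undertilde{u}_{4}=0$), bound it, and then propagate the bound to the $K'$- and $\tilde A$-blocks. Throughout, write $G_{\kappa 4}$ for the sub-block of $G$ with rows in $\kappa$ and columns in $A^{c}$, and $G_{\beta 2}$ for the sub-block with rows in $\beta$ and columns in $K'$; recall that $\mathcal{I}_{\kappa\kappa}=\sum_{n\ge 0}G^{n}_{\kappa\kappa}\ge 0$ exists because $G^{n}_{\kappa\kappa}\to 0$. First I would write $u^{*}=h+Gu^{*}$ in the $K',\tilde A,A^{c}$ partition; its $\kappa$-block reads $u^{*}_{\kappa}=h_{\kappa}+G_{\kappa\kappa}u^{*}_{\kappa}+G_{\kappa 4}u^{*}_{4}$. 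Subtracting the defining relation $\undertilde{u}_{\kappa}=h_{\kappa}+G_{\kappa\kappa}\undertilde{u}_{\kappa}$ gives $\epsilon_{\kappa}=G_{\kappa\kappa}\epsilon_{\kappa}+G_{\kappa 4}u^{*}_{4}$, hence $\epsilon_{\kappa}=\mathcal{I}_{\kappa\kappa}G_{\kappa 4}u^{*}_{4}$; by non-negativity of $\mathcal{I}_{\kappa\kappa}$ and $G$ and the triangle inequality, $|\epsilon_{i}|\le \mathcal{I}_{i}G_{\kappa 4}|u^{*}_{4}|$ for $i=2,3$, while $|\epsilon_{4}|=|u^{*}_{4}|$. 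So everything reduces to bounding $|u^{*}_{4}|$.

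Next I would obtain a bound on $|u^{*}_{\beta}|$, $\beta=\tilde A\cup A^{c}$, in terms of the still-unknown $\|u^{*}_{2}\|_{w}$. Restricting $u^{*}=h+Gu^{*}$ to $\beta$ and eliminating that block --- via a first-entrance decomposition of $\sum_{n}G^{n}$ at the first visit to $K'$, legitimate under the standing assumption that $\sum_{n}G^{n}|h|$ is finite --- yields $u^{*}_{\beta}=\sum_{n\ge 0}G^{n}_{\beta\beta}h_{\beta}+\bigl(\sum_{n\ge 0}G^{n}_{\beta\beta}G_{\beta 2}\bigr)u^{*}_{2}$. Estimating componentwise with $|u^{*}_{2}|\le\|u^{*}_{2}\|_{w}\,w$ and invoking the Lyapunov bounds (\ref{eq:eqn3__3}) gives $|u^{*}_{\beta}|\le v+\|u^{*}_{2}\|_{w}\,\nu$; in particular $|u^{*}_{4}|\le v_{4}+\|u^{*}_{2}\|_{w}\,\nu_{4}$.

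To close the loop I would combine $u^{*}_{2}=\undertilde{u}_{2}+\epsilon_{2}=\mathcal{I}_{2}h_{\kappa}+\mathcal{I}_{2}G_{\kappa 4}u^{*}_{4}$ with the bound $|u^{*}_{4}|\le v_{4}+\|u^{*}_{2}\|_{w}\,\nu_{4}$ above to get $|u^{*}_{2}|\le\mathcal{I}_{2}|h_{\kappa}|+\mathcal{I}_{2}G_{\kappa 4}v_{4}+\|u^{*}_{2}\|_{w}\,\mathcal{I}_{2}G_{\kappa 4}\nu_{4}$. Since $K'$ is finite, $\|u^{*}_{2}\|_{w}<\infty$, so I may apply $\|\cdot\|_{w}$ to both sides, use subadditivity and positive homogeneity, and rearrange --- valid precisely because $\|\mathcal{I}_{2}G_{\kappa 4}\nu_{4}\|_{w}<1$ --- to obtain $\|u^{*}_{2}\|_{w}\le m(h)$. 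Feeding $\|u^{*}_{2}\|_{w}\le m(h)$ back into the bound on $|u^{*}_{4}|$ gives $|u^{*}_{4}|\le v_{4}+\nu_{4}\,m(h)$, and feeding it into $|\epsilon_{i}|\le\mathcal{I}_{i}G_{\kappa 4}|u^{*}_{4}|$ gives $|\epsilon_{i}|\le\mathcal{I}_{i}G_{\kappa 4}v_{4}+\mathcal{I}_{i}G_{\kappa 4}\nu_{4}\cdot m(h)$ for $i=2,3$, which are the asserted bounds.

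The main obstacle is the circularity confronted in the last step: the estimate for the tail $u^{*}_{4}$ depends on $\|u^{*}_{2}\|_{w}$, while $u^{*}_{2}$ itself depends on $u^{*}_{4}$. This feedback can be resolved only because it has been routed entirely through the finite core $K'$, on which the small-gain condition $\|\mathcal{I}_{2}G_{\kappa 4}\nu_{4}\|_{w}<1$ turns the self-referential inequality for $\|u^{*}_{2}\|_{w}$ into a contraction. A secondary technical point requiring care is the elimination of the $\beta$-block from an infinite linear system in the second step; this is exactly where the Lyapunov inequalities (\ref{eq:eqn3__2})--(\ref{eq:eqn3__3}), or equivalently the first-entrance probabilistic argument, do their work.
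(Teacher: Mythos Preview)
Your proof is correct and follows essentially the same route as the paper's: derive the error identity $\epsilon_{\kappa}=\mathcal{I}_{\kappa\kappa}G_{\kappa4}u^{*}_{4}$, bound $|u^{*}_{\beta}|\le v+\|u^{*}_{2}\|_{w}\,\nu$ from the Lyapunov inequalities, and then close the self-referential inequality for $\|u^{*}_{2}\|_{w}$ using the small-gain condition. The only cosmetic difference is ordering (the paper bounds $|u^{*}_{\beta}|$ first and derives the error identity afterwards) and your more careful use of $G_{\kappa4}$ and $|h_{\kappa}|$ where the paper writes $G_{i4}$ and $|h_{2}|$; your version is what the row-partitioned definition of $\mathcal{I}_{2}$ actually requires dimensionally.
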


\begin{proof}
Put 
\begin{align*}
u^*_{\beta} = \begin{bmatrix} 
                 u^*_{3}\\
                 u^*_{4}
              \end{bmatrix},
G_{\beta2} = 
\begin{bmatrix}
G_{32}\\
G_{42}
\end{bmatrix},
\end{align*}
and note that
\begin{align*}
  u^*_{\beta} = G_{\beta2} u^*_2 + G_{\beta\beta} u^*_{\beta} + h_{\beta}.
\end{align*}
It follows that
\begin{align*}
  u^*_{\beta} = \sum_{n=0}^{\infty} G^n_{\beta\beta}  (G_{\beta2} u^*_2 + h_{\beta}).
\end{align*}

Since $|u^*_2(x)| \leq w(x) \|u^*_2 \|_w$, (\ref{eq:eqn3__3}) and the non-negativity of $G$ imply that
\begin{align*}
 |u^*_{\beta}| \leq \nu \|u^*_2 \|_w + v,
\end{align*}
where $|u^*_{\beta}| = (|u^*(x)|: x \in \beta)$. So,
\begin{align}
  |u^*_4 | \leq  \nu_4 \|u^*_2 \|_w + v_4.
\label{eq:eqn3__4}
\end{align}
Put
\begin{align*}
u^*_{\kappa} = \begin{bmatrix} 
                 u^*_2\\
                 u^*_3
              \end{bmatrix},
G_{\kappa4} = \begin{bmatrix} 
                 G_{24}\\
                 G_{34}
              \end{bmatrix},
\undertilde{u}_{\kappa} = \begin{bmatrix} 
                 \undertilde{u}_2\\
                 \undertilde{u}_3
              \end{bmatrix},
{\epsilon}_{\kappa} = \begin{bmatrix} 
                 {\epsilon}_2\\
                 {\epsilon}_3
              \end{bmatrix}.
\end{align*}
Since
\begin{align}
  u^*_{\kappa} = G_{\kappa\kappa} u^*_{\kappa} + G_{\kappa4} u^*_{4} + h_{\kappa},
\label{eq:eqn3__5}
\end{align}
it follows that 
\begin{align*}
  u^*_2 = \mathcal{I}_2 (G_{24}u^*_4 + h_{2}).
\end{align*}

In view of the fact that $(I - G_{\kappa\kappa})^{-1} = \sum_{n=0}^{\infty} G^n_{\kappa\kappa}$ with $G_{\kappa\kappa}$ non-negative, we find that
\begin{align*}
 | u^*_2 | \leq \mathcal{I}_2 G_{24} (\nu_4 \| u^*_2 \|_w + v_4) + \mathcal{I}_2 |h_{2}|.
\end{align*}
Consequently,
\begin{align*}
  \| u^*_2 \|_w \leq \| \mathcal{I}_2 G_{24} \nu_4 \|_w \cdot \| u^*_2 \|_w + \|\mathcal{I}_2 G_{24}v_4 + \mathcal{I}_2 |h_{2}| \|_w;
\end{align*}
and hence
\begin{align}
 \| u^*_2  \| \leq m(h).
\label{eq:eqn3__6}
\end{align}

Also,
\begin{align}
 \undertilde{u}_{\kappa} = G_{\kappa\kappa} \undertilde{u}_{\kappa} + h_{\kappa}.
\label{eq:eqn3__7}
\end{align}
The relations (\ref{eq:eqn3__5}) and (\ref{eq:eqn3__7}) imply that
\begin{align}
 \epsilon_{\kappa} = G_{\kappa\kappa} \epsilon_{\kappa} + G_{\kappa4} u^*_4
\label{eq:eqn3__8}
\end{align}
So, (\ref{eq:eqn3__4}),(\ref{eq:eqn3__6}),(\ref{eq:eqn3__7}) and (\ref{eq:eqn3__8}) show that
\begin{align*}
 | \epsilon_i | \leq \mathcal{I}_i  G_{i4} (\nu_4 \cdot m(h) + v_4)
\end{align*}
for $i = 2,3$, and
\begin{align*}
|\epsilon_4| \leq \nu_4 \cdot m(h) + v_4,
\end{align*}
proving the theorem.
\end{proof}

\section{Computable Error Bounds for the Truncation Approximations}\label{sec:computable_error_bounds}

In this section, we apply the general bounds developed in Section \ref{sec:general_bound_lyapunov} to the task of developing computable bounds on $|w(\theta_0,z,f)-\undertilde{w}(\theta_0,z,f)|$ and $|w'(\theta_0,z,f)-\undertilde{w}'(\theta_0,z,f)|$. As noted in Section 2, we will be applying this theory to $f=r$ and $f=e$, so that we may assume in this section that $f$ is non-negative. (Recall from Section 1 that we are assuming throughout this paper that $r$ is non-negative.)
We start by writing
\begin{align*}
P(\theta_0) = \begin{blockarray}{ccc}
                 & \{z\} & \Lambda \\
                 \begin{block}{c[cc]}
                   \{z\} & P_{11}(\theta_0) & P_{1\Lambda}(\theta_0)  \\
                 \Lambda & P_{\Lambda1}(\theta_0) & P_{\Lambda\Lambda}(\theta_0)  \\
                 \end{block}
              \end{blockarray},
f = \begin{blockarray}{cc}
                     \\
                 \begin{block}{c[c]}
                 \{z\} & f_1 \\
                 \Lambda & f_{\Lambda}  \\
                 \end{block}
              \end{blockarray}.
\end{align*}
where
\begin{align*}
P_{\Lambda\Lambda}(\theta_0) = \begin{blockarray}{cccc}
                 & K' & \tilde{A} & A^c \\
                 \begin{block}{c[ccc]}
                 K' &  P_{22}(\theta_0) & P_{23}(\theta_0) & P_{24}(\theta_0) \\
                 \tilde{A} &  P_{32}(\theta_0) & P_{33}(\theta_0) & P_{34}(\theta_0) \\
                 A^c  & P_{42}(\theta_0) & P_{43}(\theta_0) & P_{44}(\theta_0) \\
                 \end{block}
              \end{blockarray},
f_{\Lambda} = \begin{blockarray}{cc}
                     \\
                 \begin{block}{c[c]}
                 K' & f_2  \\
                 \tilde{A} & f_3  \\
                 A^c & f_{4}  \\
                 \end{block}
              \end{blockarray}.
\end{align*}
We note that (\ref{eq:eqn2_1}) implies that
\begin{align}
 w(\theta_0, z,f) = f(z) + P_{1\Lambda}(\theta_0) \sum_{n=0}^{\infty} P^n_{\Lambda\Lambda}(\theta_0) f_{\Lambda}
\end{align}
and, (\ref{eq:eqn2_7}) and (\ref{eq:eqn2_8}) imply that 
\begin{align}
 w'(\theta_0, z, f) =& P'_{1\Lambda}(\theta_0) \mathcal{I}_{\Lambda\Lambda}(\theta_0) f_{\Lambda} \nonumber \\
                    &+ P_{1\Lambda}(\theta_0) \mathcal{I}_{\Lambda\Lambda}(\theta_0) P'_{\Lambda\Lambda}(\theta_0)  \mathcal{I}_{\Lambda\Lambda}(\theta_0) f_{\Lambda},
\label{eq:eqn4__2}
\end{align}
where $\mathcal{I}_{\Lambda\Lambda}(\theta_0) = \sum_{n=0}^{\infty} P^n_{\Lambda\Lambda}(\theta_0)$.

Also, our approximations can be rewritten as 
\begin{align*}
 \undertilde{w}(\theta_0, z, f) = f(z) + P_{1\kappa}(\theta_0) \mathcal{I}_{\kappa\kappa}(\theta_0) f_{\kappa}
\end{align*}
and
\begin{align*}
 \undertilde{w}'(\theta_0, z, f) =& P'_{1\kappa}(\theta_0) \mathcal{I}_{\kappa\kappa}(\theta_0) f_{\kappa}\\
                    &+ P_{1\kappa}(\theta_0) \mathcal{I}_{\kappa\kappa}(\theta_0) P'_{\kappa\kappa}(\theta_0)  \mathcal{I}_{\kappa\kappa}(\theta_0) f_{\kappa}
\end{align*}
where $\mathcal{I}_{\kappa\kappa}(\theta_0) = \sum_{n=0}^{\infty} P^n_{\kappa\kappa}(\theta_0) = (I - P_{\kappa\kappa}(\theta_0))^{-1}$; see (\ref{eq:eqn2_5}) and (\ref{eq:eqn2_6}). Write
\begin{align*}
\mathcal{I}_{\kappa\kappa}(\theta_0) = \begin{blockarray}{cc}
                     \\
                 \begin{block}{c[c]}
                 K' & \mathcal{I}_2(\theta_0)  \\
                 \tilde{A} & \mathcal{I}_3(\theta_0)  \\
                 \end{block}
              \end{blockarray}.
\end{align*}
Put
\begin{align*}
\undertilde{w}(\theta_0,f) = \begin{blockarray}{cc}
                     \\
                 \begin{block}{c[c]}
                 \{z\} & \undertilde{w}(\theta_0,z,f)  \\
                 \kappa & \undertilde{w}_{\kappa}(\theta_0,f)  \\
                 A^c & 0  \\
                 \end{block}
              \end{blockarray} = 
\begin{blockarray}{cc}
       \\
   \begin{block}{c[c]}
   \{z\} & \undertilde{w}(\theta_0,z,f)  \\
   \Lambda & \undertilde{w}_{\Lambda}(\theta_0,f)  \\
   \end{block}
\end{blockarray}, 
\end{align*}
\begin{align*}
   w_{\Lambda}(\theta_0,f) = \mathcal{I}_{\Lambda\Lambda}(\theta_0) f_{\Lambda},
\end{align*}
\begin{align}
   w'_{\Lambda}(\theta_0,f) = \mathcal{I}_{\Lambda\Lambda}(\theta_0) P'_{\Lambda\Lambda}(\theta_0) \mathcal{I}_{\Lambda\Lambda}(\theta_0)f_{\Lambda},
\end{align}
\begin{align*}
  \epsilon_{\Lambda}(f)^T = w_{\Lambda}(\theta_0,f)^T - \undertilde{w}_{\Lambda}(\theta_0,f)^T = [ \epsilon_2(f)^T, \epsilon_3(f)^T, \epsilon_4(f)^T],  
\end{align*}
\begin{align*}
  \epsilon'_{\Lambda}(f)^T = w'_{\Lambda}(\theta_0,f)^T - \undertilde{w}'_{\Lambda}(\theta_0,f)^T = [ \epsilon'_2(f)^T, \epsilon'_3(f)^T, \epsilon'_4(f)^T].  
\end{align*}

Let
\begin{align*}
P_{\beta\beta}(\theta_0) = \begin{bmatrix}
                               P_{33}(\theta_0) & P_{34}(\theta_0)  \\
                               P_{43}(\theta_0) & P_{44}(\theta_0)  \\
                            \end{bmatrix},
\end{align*}
and write $e^T = [e^T_1, e^T_2, e^T_3, e^T_4]$ in partitioned form. 
Because we are assuming that $P(\theta_0)$ is irreducible, $P^n_{\kappa\kappa}(\theta_0) \to 0$ as $n \to \infty$.
Then, Theorem \ref{thm1} immediately implies the following result when applied to $\epsilon_{\Lambda}(f)$.

\begin{proposition}
Assume that $f$ is non-negative and suppose that $\|\mathcal{I}_{2}(\theta_0) P_{\kappa4}(\theta_0) e_4 \|_{e_2} < 1$ and that there exists a non-negative function $v = (v(x): x \in \beta)$ such that
\begin{align}
 P_{\beta\beta}(\theta_0) v \leq v - f_{\beta}.
\label{eqn:eqn4_4}
\end{align}
Then,
\begin{align*}
  | \epsilon_i(f) | \leq \mathcal{I}_i(\theta_0) P_{i4}(\theta_0) v_4 + \mathcal{I}_i(\theta_0) P_{i4}(\theta_0) e_4 \cdot m(f) \overset{\Delta}{=} \mu_i(f)
\end{align*}
for $i = 2,3$,
\begin{align*}
 | \epsilon_4(f) | \leq v_4 + e_4 \cdot m(f) \overset{\Delta}{=} \mu_4(f)
\end{align*}
and
\begin{align*}
 m(f) = \frac{\|\mathcal{I}_{2}(\theta_0)P_{24}(\theta_0) v_4 + \mathcal{I}_{2}(\theta_0)|f_{2} |   \|_{e_2}}{1 - \|\mathcal{I}_{2}(\theta_0) P_{24}(\theta_0) e_4  \|_{e_2}}.
\end{align*}
\label{prop2}

\end{proposition}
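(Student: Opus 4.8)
The plan is to obtain Proposition~\ref{prop2} as a direct specialization of Theorem~\ref{thm1}, applied with the non-negative matrix $G = P_{\Lambda\Lambda}(\theta_0)$ and the vector $h = f_{\Lambda}$, the latter being non-negative because $f \geq 0$ (so that $|h_{\beta}| = f_{\beta}$ and $|h_2| = f_2$). Under this identification one has $u^* = \sum_{n\geq 0} G^n h = \mathcal{I}_{\Lambda\Lambda}(\theta_0) f_{\Lambda} = w_{\Lambda}(\theta_0,f)$, while the finite-dimensional approximant $\undertilde{u}$ of Section~\ref{sec:general_bound_lyapunov} --- which carries $\mathcal{I}_{\kappa\kappa}(\theta_0) f_{\kappa}$ on $\kappa$ and $0$ on $A^c$ --- is exactly $\undertilde{w}_{\Lambda}(\theta_0,f)$. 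Hence the error vector $\epsilon = u^* - \undertilde{u}$ appearing in Theorem~\ref{thm1} coincides with $\epsilon_{\Lambda}(f)$, and its blocks $\epsilon_2,\epsilon_3,\epsilon_4$ coincide with $\epsilon_2(f),\epsilon_3(f),\epsilon_4(f)$.

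It then remains to verify that the hypotheses of Theorem~\ref{thm1} hold. The condition $G^n_{\kappa\kappa}\to 0$ is just $P^n_{\kappa\kappa}(\theta_0)\to 0$, which follows from irreducibility of $P(\theta_0)$, as noted in the text preceding the proposition. For the Lyapunov inequalities (\ref{eq:eqn3__2}) I would invoke Remark~\ref{rmk_1}: since $P(\theta_0)$ is stochastic, $P_{\Lambda\Lambda}(\theta_0)$ and each of its subblocks are substochastic, so one may take $w = e_2$ and $\nu = e_{\beta}$. With these choices the first inequality in (\ref{eq:eqn3__2}) reads $P_{\beta\beta}(\theta_0) v \leq v - f_{\beta}$, which is precisely the assumed (\ref{eqn:eqn4_4}), while the second reads $P_{\beta\beta}(\theta_0) e_{\beta} \leq e_{\beta} - P_{\beta 2}(\theta_0) e_2$, which holds automatically because for each $x \in \beta$ the entries of row $x$ of $P_{\beta 2}(\theta_0)$ and of $P_{\beta\beta}(\theta_0)$ together sum to $1 - P(\theta_0,x,z) \leq 1$. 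Finally, the contraction condition $\|\mathcal{I}_2 G_{24}\nu_4\|_w < 1$ becomes $\|\mathcal{I}_2(\theta_0) P_{\kappa 4}(\theta_0) e_4\|_{e_2} < 1$ once $\nu_4 = e_4$ and $w = e_2$ are substituted, and this is exactly the hypothesis of the proposition. Reading the conclusion of Theorem~\ref{thm1} through the same dictionary (with $|h_2| = f_2$, so that $m(h)$ becomes $m(f)$) then gives the stated bounds on $|\epsilon_i(f)|$ for $i=2,3$ and on $|\epsilon_4(f)|$.

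Since the proposition is essentially a re-reading of an already-established result, there is no genuinely hard step; the only points requiring care are the bookkeeping between the two block partitionings --- the extra $\{z\}$-block present here never enters $\epsilon_{\Lambda}(f)$, which is why Section~\ref{sec:general_bound_lyapunov} is phrased with only the blocks $K', \tilde{A}, A^c$ --- and confirming that the series $u^* = \sum_n G^n h$ is finite so that the steps inside the proof of Theorem~\ref{thm1} are legitimate. That finiteness follows on $\beta$ from $\sum_n P^n_{\beta\beta}(\theta_0) f_{\beta} \leq v$, a consequence of (\ref{eqn:eqn4_4}) via (\ref{eq:eqn3__3}), and on $\kappa$ from finiteness of the matrix $\mathcal{I}_{\kappa\kappa}(\theta_0)$ together with the finiteness of $u^*$ on $A^c$ just established; in particular no separate hypothesis that $w(\theta_0,z,f) < \infty$ is needed.
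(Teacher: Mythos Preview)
Your proposal is correct and follows exactly the paper's approach: the paper proves Proposition~\ref{prop2} in a single sentence, stating that Theorem~\ref{thm1} ``immediately implies the following result when applied to $\epsilon_{\Lambda}(f)$,'' and your write-up carries out precisely this specialization, identifying $G=P_{\Lambda\Lambda}(\theta_0)$, $h=f_{\Lambda}$, and invoking Remark~\ref{rmk_1} to choose $w=e_2$, $\nu=e_{\beta}$. Your added remarks on finiteness of $u^*$ and on the bookkeeping between the partitions are accurate and make explicit what the paper leaves implicit.
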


Proposition \ref{prop2} clearly implies that
\begin{align*}
 |w(\theta_0,z,f)-\undertilde{w}(\theta_0,z,f)| \leq \sum_{i=2}^4 P_{1i}(\theta_0)\mu_i(f) \overset{\Delta}{=} \mu_1(f).
\end{align*}

For $1 \leq i \leq 4$, put
\begin{align*}
  \tilde{w}_i(\theta_0,f) = \undertilde{w}_i(\theta_0,f) + \mu_i(f)
\end{align*}
and note that
\begin{align*}
  \undertilde{w}_i(\theta_0,f) \leq w_i(\theta_0,f) \leq \tilde{w}_i(\theta_0,f).
\end{align*}

Put $T_K = \inf\{n>T: X_n \in K \}$. To construct appropriate error bounds on the derivative $w'(\theta_0,f)$, observe that for $x\in S$,
\begin{align}
 w'(\theta_0,x,f) =& \undertilde{w}'(\theta_0,x,f) \nonumber \\
                   &+ \mathbb{E}_x^{\theta_0} \sum_{j=0}^{T-1} \frac{P'(\theta_0,X_j,X_{j+1})}{P(\theta_0,X_j,X_{j+1})} \sum_{l=T}^{T_K-1} f(X_l) I(T < \tau(z)) \nonumber\\
                   &+ \mathbb{E}_x^{\theta_0} \sum_{j=0}^{T-1} \frac{P'(\theta_0,X_j,X_{j+1})}{P(\theta_0,X_j,X_{j+1})} \sum_{l=T_K}^{\tau(z)-1} f(X_l) I(T < \tau(z), X_{T_K} \neq z) \nonumber\\
                   &+ \mathbb{E}_x^{\theta_0} \sum_{j=T}^{T_K-1} \frac{P'(\theta_0,X_j,X_{j+1})}{P(\theta_0,X_j,X_{j+1})} \sum_{l=j+1}^{T_K-1} f(X_l) I(T < \tau(z) ) \nonumber\\
                   &+ \mathbb{E}_x^{\theta_0} \sum_{j=T}^{T_K-1} \frac{P'(\theta_0,X_j,X_{j+1})}{P(\theta_0,X_j,X_{j+1})} \sum_{l=T_K}^{\tau(z)-1} f(X_l) I(T < \tau(z), X_{T_K} \neq z) \nonumber\\
                   &+ \mathbb{E}_x^{\theta_0} w'(\theta_0,X_{T_K},f) I(T<\tau(z), X_{T_K} \neq z) \label{eqn:eqn4_5} \\ 
\overset{\Delta}{=}& \undertilde{w}'(\theta_0,x,f) + (1) + (2) + (3) + (4) \nonumber \\
                   & + \mathbb{E}_x^{\theta_0} w'_2(\theta_0,X_{T_K},f) I(T<\tau(z), X_{T_K} \neq z). \nonumber
\end{align}
It follows that for $x \in \Lambda$,
\begin{align*}
  \epsilon'(x,f) =& (1) + (2) + (3) + (4) \\
                 &+  \mathbb{E}_x^{\theta_0} \epsilon'(X_{T_K},f) I(T<\tau(z), X_{T_{K}} \neq z)\\
                 &+  \mathbb{E}_x^{\theta_0} \undertilde{w}'_2(\theta_0, X_{T_K},f) I(T<\tau(z), X_{T_{K}} \neq z)\\
                 =& (1) + (2) + (3) + (4) + \mathbb{E}_x^{\theta_0} \epsilon'_2(X_{T_K},f)I(T<\tau(z), X_{T_K} \neq z)\\ 
                 &+  \mathbb{E}_x^{\theta_0} \undertilde{w}'_2(\theta_0, X_{T_K},f) I(T<\tau(z), X_{T_{K}} \neq z).
\end{align*}

For $x \in \Lambda$, the absolute value of (1) can be bounded as follows:
\begin{align*}
|(1)| \leq& \mathbb{E}_x^{\theta_0} \sum_{j=0}^{(T \wedge \tau(z))-1} \left| \frac{P'(\theta_0, X_j, X_{j+1})}{P(\theta_0, X_j, X_{j+1})}  \right| v(X_T) I(T < \tau(z)) \\ 
 =& \begin{cases}
    \left( \left(I - P_{\kappa\kappa}(\theta_0) \right)^{-1} |P'_{\kappa\kappa}(\theta_0)| \left(I - P_{\kappa\kappa}(\theta_0) \right)^{-1} P_{\kappa4}(\theta_0) v_4 \right) (x), & x \in \kappa  \\
    0,   &  x \in A^c
  \end{cases} \\
  \overset{\Delta}{=}& a(x).
\end{align*}
As for (2), note that
\begin{align*}
  |(2)| \leq& \mathbb{E}_x^{\theta_0} \sum_{j=0}^{T-1} \frac{|P'(\theta_0, X_j, X_{j+1})|}{P(\theta_0, X_j, X_{j+1})} \max_{y \in K'} \tilde{w}_2(y) I(T < \tau(z)) \\ 
 &= \begin{cases}
    \left( \left(I - P_{\kappa\kappa}(\theta_0) \right)^{-1} |P'_{\kappa\kappa}(\theta_0)| \left(I - P_{\kappa\kappa}(\theta_0) \right)^{-1} P_{\kappa4}(\theta_0) e_4 \right) (x) \max_{y \in K'} \tilde{w}_2(y), & x \in \kappa  \\
    0,   & x \in A^c
  \end{cases}\\
  \overset{\Delta}{=}& \tilde{a}(x).
\end{align*}

To bound the absolute value of (3), we assume the existence of a non-negative Lyapunov function $\tilde{v}: \beta \to \mathbb{R}_+$ such that
\begin{align}
  P_{\beta\beta}(\theta_0) \tilde{v} \leq \tilde{v} - |P'_{\beta\beta}(\theta_0)|v.
\label{eqn:eqn4_6}
\end{align}
Then,
\begin{align*}
  \mathbb{E}_x^{\theta_0} \sum_{j=0}^{T_K-1} \frac{|P'(\theta_0, X_j, X_{j+1})|}{P(\theta_0, X_j, X_{j+1})} v(X_{j+1}) \leq \tilde{v}(x)
\end{align*}
for $x \in \beta$, so that
\begin{align*}
 |(3)| \leq& 
 \begin{cases} 
   \left( (I - P_{\kappa\kappa}(\theta_0))^{-1} P_{\kappa4} \tilde{v}_{4} \right) (x), & \quad x \in \kappa\\
   \tilde{v}_4(x), & \quad x \in A^c
 \end{cases} \\
  \overset{\Delta}{=}& b(x)
\end{align*}
for $x \in \Lambda$.

For the fourth term, we assume the existence of a third Lyapunov function $\tilde{\nu}: \beta \to \mathbb{R}_+$ such that
\begin{align}
  P_{\beta\beta}(\theta_0) \tilde{\nu} \leq \tilde{\nu} - |P'_{\beta\beta}(\theta_0)| e_{\beta},
  \label{eqn:eqn4_7}
\end{align}
so that
\begin{align*}
 \mathbb{E}_x^{\theta_0} \sum_{j=0}^{T_K-1} \frac{|P'(\theta_0, X_j, X_{j+1})|}{P(\theta_0, X_j, X_{j+1})} \leq \tilde{\nu}(x)
\end{align*}
for $x \in \beta$. Consequently, 
\begin{align*}
  |(4)| \leq& 
 \begin{cases} 
   \left( (I - P_{\kappa\kappa}(\theta_0))^{-1} P_{\kappa4} \tilde{\nu}_{4} \right) (x) \cdot \max_{y \in K'} \tilde{w}_2(y), & \quad x \in \kappa\\
   \tilde{\nu}_4(x) \cdot \max_{y \in K'} \tilde{w}_2(y), & \quad x \in A^c
 \end{cases}\\ 
  \overset{\Delta}{=}& \tilde{b}(x)
\end{align*}
for $x \in \Lambda$.

Of course,
\begin{align*}
 & |\mathbb{E}_x^{\theta_0} \undertilde{w}'_2 (\theta_0, X_{T_K}, f) I(T< \tau(z), X_{T_K} \neq z)|  \\
\leq & \max_{y \in K'} |\undertilde{w}'_2(\theta_0,y,f)| \cdot P_x^{\theta_0} (T < \tau(z)) \\ 
\leq &
 \begin{cases} 
    (I - P_{\kappa\kappa}(\theta_0))^{-1} P_{\kappa4}(\theta_0)\cdot \max_{y\in K'} |\undertilde{w}'_2(\theta_0, y, f)| , &  x \in \kappa\\
    \max_{y \in K'} |\undertilde{w}'_2(\theta_0,y,f)|, &  x \in A^c
 \end{cases} \\
  \overset{\Delta}{=}& c(x)
\end{align*}
for $x \in \Lambda$.

In view of (\ref{eqn:eqn4_5}), we conclude that for $x \in \Lambda$,
\begin{align*}
 | \epsilon'(x,f)| \leq& a(x) + \tilde{a}(x) + b(x) + \tilde{b}(x) + c(x) \\
                     &+ \mathbb{E}_x^{\theta_0} |\epsilon'(X_{T_K},f)| I(T < \tau(z), X_{T_K} \neq z)\\
                     &\leq{a(x) + \tilde{a}(x) + b(x) + \tilde{b}(x) + c(x)} \\
                     &{+ \| \epsilon'_2(f) \|_{e_2} \cdot \mathbb{P}_x^{\theta_0}  (T < \tau(z)).}
\end{align*}
Hence,
\begin{align*}
  \| \epsilon'_2(f)  \|_{e_2} \leq \|a_2 + \tilde{a}_2 + b_2 + \tilde{b}_2 + c_2  \|_{e_2} + \| \epsilon'_2(f) \|_{e_2} \cdot \| \mathcal{I}_2 (\theta_0) P_{24}(\theta_0) e_4 \|_{e_2}
\end{align*}
so that
\begin{align}
 \| \epsilon'_2(f)  \|_{e_2} \leq \frac{\| a_2 + \tilde{a}_2 + b_2 + \tilde{b}_2 + c_2 \|_{e_2}}{1 -  \| \mathcal{I}_2(\theta_0)P_{24}(\theta_0)e_4  \|_{e_2}}   \overset{\Delta}{=} \tilde{m}(f).
\label{eqn:eqn4_8}
\end{align}

With the bound $\tilde{m}(f)$ in place, we find that
\\
\begin{align}
\begin{aligned}
 | \epsilon'_{\kappa}(f)| &\leq a_{\kappa} + \tilde{a}_{\kappa} + b_{\kappa} + \tilde{b}_{\kappa} + c_{\kappa}\\ 
                          &+ \tilde{m}(f)\; \mathcal{I}_{\kappa\kappa}(\theta_0) P_{\kappa4}(\theta_0) e_4 e_{\kappa}\overset{\Delta}{=} \tilde{\mu}_{\kappa}(f),\\
 | \epsilon'_{4}(f)| &\leq a_{4} + \tilde{a}_{4} + b_{4} + \tilde{b}_{4} + c_{4} + \tilde{m}(f)\;  e_{4} \overset{\Delta}{=} \tilde{\mu}_{4}(f).
\end{aligned}
\label{eqn:eqn4_9}
\end{align}
We summarize our findings with the following theorem.
\begin{theorem}
  Assume that $f$ is non-negative and suppose that $\| \mathcal{I}_2(\theta_0) P_{\kappa4}(\theta_0) e_4 \|_{e_2}<1$ and that there exist non-negative $v$, $\tilde{v}$ and $\tilde{\nu}$ satisfying (\ref{eqn:eqn4_4}), (\ref{eqn:eqn4_6}), and (\ref{eqn:eqn4_7}), respectively. Then, the bounds of (\ref{eqn:eqn4_8}) and (\ref{eqn:eqn4_9}) hold.
\label{thm2}
\end{theorem}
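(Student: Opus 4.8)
The plan is to assemble Theorem \ref{thm2} from the decomposition \eqref{eqn:eqn4_5} together with the five term-by-term bounds $a,\tilde a, b,\tilde b, c$ that were built up in the preceding pages, and the only genuine work left is to verify that each of the Lyapunov inequalities \eqref{eqn:eqn4_4}, \eqref{eqn:eqn4_6}, \eqref{eqn:eqn4_7} does in fact deliver the corresponding expected-sum estimate, and then to justify the self-referential step that produces $\tilde m(f)$. Concretely, the statement to be proved is just: under the three Lyapunov hypotheses and the contraction condition $\|\mathcal{I}_2(\theta_0)P_{\kappa4}(\theta_0)e_4\|_{e_2}<1$, the inequalities \eqref{eqn:eqn4_8} and \eqref{eqn:eqn4_9} hold. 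Since those inequalities were already derived inline in the text leading up to the theorem, the "proof" is essentially a statement that the chain of displays above constitutes a valid argument; a referee-grade version must make the three supporting claims precise.

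First I would record that each Lyapunov drift inequality yields an expected-occupation bound by the standard Foster–Lyapunov/comparison argument: if $P_{\beta\beta}(\theta_0)g \le g - \phi$ with $g,\phi\ge 0$ on $\beta$, then iterating gives $\sum_{n=0}^{N}P_{\beta\beta}^n(\theta_0)\phi \le g - P_{\beta\beta}^{N+1}(\theta_0)g \le g$, hence $\sum_{n\ge0}P_{\beta\beta}^n(\theta_0)\phi\le g$; this is exactly the template in \eqref{eq:eqn3__2}–\eqref{eq:eqn3__3} from Section \ref{sec:general_bound_lyapunov}, now applied three times with $(g,\phi)=(v,f_\beta)$, $(\tilde v,|P'_{\beta\beta}(\theta_0)|v)$, and $(\tilde\nu,|P'_{\beta\beta}(\theta_0)|e_\beta)$. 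Translating each occupation bound into the path language: on $\{T<\tau(z)\}$ the chain between times $T$ and $T_K$ lives in $\beta$, so $\mathbb{E}_x^{\theta_0}\sum_{j=T}^{T_K-1}\frac{|P'(\theta_0,X_j,X_{j+1})|}{P(\theta_0,X_j,X_{j+1})}v(X_{j+1})$ is controlled by $\tilde v$ restricted to $\beta$ (entered via one step of $P_{\kappa4}$ from $\kappa$), giving the bound $b(x)$ on term $(3)$, and likewise $\tilde\nu$ controls term $(4)$. For terms $(1)$ and $(2)$, the relevant occupation is over $\kappa$ before time $T\wedge\tau(z)$, whence the factor $(I-P_{\kappa\kappa}(\theta_0))^{-1}|P'_{\kappa\kappa}(\theta_0)|(I-P_{\kappa\kappa}(\theta_0))^{-1}P_{\kappa4}(\theta_0)$ appears — this is a finite-dimensional identity needing no Lyapunov function, since $P_{\kappa\kappa}^n(\theta_0)\to0$ by irreducibility. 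The $c(x)$ bound on the residual $\mathbb{E}_x^{\theta_0}\undertilde w'_2(\theta_0,X_{T_K},f)I(\cdot)$ is the crudest: bound $|\undertilde w'_2|$ by its max over $K'$ and $I(T<\tau(z),X_{T_K}\ne z)$ by $I(T<\tau(z))$, whose probability from $x\in\kappa$ is $(I-P_{\kappa\kappa}(\theta_0))^{-1}P_{\kappa4}(\theta_0)e_4$.

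The one step that is more than bookkeeping — and the place I expect to spend the most care — is the closure of the recursion for $\|\epsilon'_2(f)\|_{e_2}$. From \eqref{eqn:eqn4_5} the error $\epsilon'(x,f)$ at $x\in\Lambda$ satisfies a renewal-type identity in which $\epsilon'_2$ reappears on the right through the term $\mathbb{E}_x^{\theta_0}\epsilon'_2(X_{T_K},f)I(T<\tau(z),X_{T_K}\ne z)$; bounding that by $\|\epsilon'_2(f)\|_{e_2}\cdot\mathbb{P}_x^{\theta_0}(T<\tau(z))$, taking $x\in K'$, dividing by $e_2(x)=1$ and maximizing gives $\|\epsilon'_2(f)\|_{e_2}\le \|a_2+\tilde a_2+b_2+\tilde b_2+c_2\|_{e_2}+\|\epsilon'_2(f)\|_{e_2}\|\mathcal{I}_2(\theta_0)P_{24}(\theta_0)e_4\|_{e_2}$. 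Here one must check that $\|\epsilon'_2(f)\|_{e_2}<\infty$ a priori — this is where \eqref{eq:eqn2_8}, i.e. the finiteness assumptions underlying Proposition \ref{prop1}, together with the triangle inequality $|\epsilon'_2|\le|w'_2|+|\undertilde w'_2|$, is needed so that the self-referential inequality can legitimately be rearranged; then the condition $\|\mathcal{I}_2(\theta_0)P_{24}(\theta_0)e_4\|_{e_2}<1$ (note $P_{24}$ is a submatrix of $P_{\kappa4}$, so this follows from the stated hypothesis) lets one solve for $\|\epsilon'_2(f)\|_{e_2}$ and obtain \eqref{eqn:eqn4_8}. Finally, substituting this value of $\tilde m(f)$ back into the term-by-term bounds over $\kappa$ and $A^c$ — adding $a_\bullet+\tilde a_\bullet+b_\bullet+\tilde b_\bullet+c_\bullet$ and the now-constant contribution $\tilde m(f)\,\mathcal{I}_{\kappa\kappa}(\theta_0)P_{\kappa4}(\theta_0)e_4$ on $\kappa$ (resp. $\tilde m(f)e_4$ on $A^c$) — yields \eqref{eqn:eqn4_9}, completing the proof. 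I would end by remarking that, exactly as in Remark \ref{rmk_1}, when $P(\theta_0)$ itself is substochastic on $\beta$ one may take $e_\beta$ in place of a separately-constructed $\tilde\nu$, so that in that case only $v$ and $\tilde v$ must actually be exhibited.
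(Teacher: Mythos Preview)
Your proposal is correct and follows essentially the same route as the paper: the paper's ``proof'' of Theorem~\ref{thm2} is precisely the chain of displays preceding the theorem statement (the decomposition~\eqref{eqn:eqn4_5}, the five termwise bounds $a,\tilde a,b,\tilde b,c$, and the self-referential inequality solved under the contraction hypothesis), and you have faithfully reconstructed that argument. Your added observation that one must know $\|\epsilon'_2(f)\|_{e_2}<\infty$ \emph{a priori} before rearranging the recursion into~\eqref{eqn:eqn4_8} is a point the paper leaves implicit, so your version is slightly more careful there; otherwise the two arguments coincide.
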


Since 
\begin{align*}
  w(\theta,z,f) - \undertilde{w}(\theta,z,f) = \sum_{i=2}^4 P_{1i}(\theta_0) \epsilon_i(f)
\end{align*}
we get 
\begin{align*}
  & |w'(\theta,z,f) - \undertilde{w}'(\theta,z,f)| \leq \sum_{i=2}^4 P_{1i}(\theta_0) | \epsilon'_i(f)| + \sum_{i=2}^4 |P'_{1i}(\theta_0)| \epsilon_i(f) \\
  \leq & \sum_{i=2}^4 P_{1i}(\theta_0)  \tilde{\mu}_i(f) + \sum_{i=2}^4 |P'_{1i}(\theta_0)| \mu(f).
\end{align*}

\section{Computable Error Bounds for the Derivative}\label{sec:errorbound_deriv}

In this section, we use Proposition \ref{prop2} and Theorem \ref{thm2} to develop bounds on $\alpha'(\theta_0)$. Note that
\begin{align}
  \alpha'(\theta_0) = \frac{w'(\theta_0,z,r) - \alpha(\theta_0)w'(\theta,z,e)}{w(\theta_0,z,e)}.
\end{align}
Then, Proposition \ref{prop2} and the non-negativity of $r$ imply that
\begin{align}
\undertilde{\alpha}(\theta_0) \overset{\Delta}{=} \frac{\undertilde{w}(\theta_0,z,r)}{\undertilde{w}(\theta_0,z,e)+\mu_1(e)} \leq \alpha(\theta_0) \leq \frac{\undertilde{w}(\theta_0,z,r)+\mu_1(r)}{\undertilde{w}(\theta_0,z,e)} \overset{\Delta}{=} \tilde{\alpha}(\theta_0) 
\label{eq:eqn5_2}
\end{align}
and
\begin{align}
 \undertilde{w}(\theta_0,z,e) \leq w(\theta_0,z,e) \leq \undertilde{w}(\theta_0,z,e)+\mu_1(e).
\end{align}
On the other hand, Theorem \ref{thm2} and (\ref{eq:eqn5_2}) yield the bounds
\begin{align}
  l' \leq \alpha(\theta_0) w'(\theta_0,z,e) \leq u',
\end{align}
where
\begin{align*}
 l' = \begin{cases}
      \undertilde{\alpha}(\theta_0) (\undertilde{w}'(\theta_0,z,e)-\mu_1'(e)), & \undertilde{w}'(\theta_0,z,e)-\mu'_1(e) \geq 0 \\ 
      \tilde{\alpha}(\theta_0) (\undertilde{w}'(\theta_0,z,e)-\mu_1'(e)), & \undertilde{w}'(\theta_0,z,e)-\mu'_1(e) < 0  
      \end{cases}
\end{align*}
and
\begin{align*}
 u' = \begin{cases}
      \tilde{\alpha}(\theta_0) (\undertilde{w}'(\theta_0,z,e)+\mu_1'(e)), & \undertilde{w}'(\theta_0,z,e)+\mu'_1(e) \geq 0 \\ 
      \undertilde{\alpha}(\theta_0) (\undertilde{w}'(\theta_0,z,e)+\mu_1'(e)), & \undertilde{w}'(\theta_0,z,e)+\mu'_1(e) < 0.  
      \end{cases}
\end{align*}
Another application of Theorem \ref{thm2} then yields the bounds
\begin{align}
 \undertilde{\alpha}'(\theta_0) \leq \alpha'(\theta_0) \leq \tilde{\alpha}'(\theta_0),
\label{eq:eqn5_5}
\end{align}
where
\begin{align*}
\undertilde{\alpha}'(\theta_0) = \begin{cases}
                                \frac{\undertilde{w}'(\theta_0,z,r)-\mu_1'(r)-u'}{\undertilde{w}(\theta_0,z,e)+\mu_1(e)},& \quad \undertilde{w}'(\theta_0,z,r)-\mu_1'(r)-u'\geq 0,\\
                                \frac{\undertilde{w}'(\theta_0,z,r)-\mu_1'(r)-u'}{\undertilde{w}(\theta_0,z,e)},& \quad \undertilde{w}'(\theta_0,z,r)-\mu_1'(r)-u'<0
                                \end{cases}
\end{align*}
and
\begin{align*}
\tilde{\alpha}'(\theta_0) = \begin{cases}
                                \frac{\undertilde{w}'(\theta_0,z,r)+\mu_1'(r)-l'}{\undertilde{w}(\theta_0,z,e)},& \quad \undertilde{w}'(\theta_0,z,r)+\mu_1'(r)-l'\geq 0,\\
                                \frac{\undertilde{w}'(\theta_0,z,r)+\mu_1'(r)-l'}{\undertilde{w}(\theta_0,z,e)+\mu_1(e)},& \quad \undertilde{w}'(\theta_0,z,r)+\mu_1'(r)-l'<0.
                                \end{cases}
\end{align*}
The inequalities (\ref{eq:eqn5_5}) provide our bounds.

\section{Extension to Markov Jump Processes}\label{sec:extension_markov_jump_process}

We now briefly discuss the extension of the results of Sections \ref{sec:trunc_approx_grads} through \ref{sec:errorbound_deriv} to the setting of Markov jump processes. In particular, suppose that $(X(t): t\geq 0)$ is an irreducible  Markov jump process with a rate matrix $Q(\theta) = (Q(\theta,x,y): x,y \in S)$ depending on a scalar parameter $\theta$. 
Let $R(\theta) = (R(\theta,x,y): x,y \in S)$ be the stochastic matrix for which
\begin{align*}
 R(\theta,x,y) = \begin{cases}
                 Q(\theta,x,y)/\lambda(\theta,x)&, x \neq y\\
                 0&, x = y
                 \end{cases}
\end{align*}
where $\lambda(\theta,x) = -Q(\theta,x,x)$. Suppose that $R(\theta)$ is recurrent, thereby implying that $(X(t): t\geq 0)$ is non-explosive and recurrent under $Q(\theta)$; see for example \cite{breiman1968probability}.

If we assume that $(X(t): t\geq 0)$ is positive recurrent under $Q(\theta)$, then $Q(\theta)$ has a unique stationary distribution $\pi(\theta) = (\pi(\theta,y): y \in S)$ satisfying $\pi(\theta) Q(\theta) = 0$, where $\pi(\theta)$ is encoded as a row vector. For a given reward function $\tilde{r}: S \to \mathbb{R}_+$ (encoded as a column vector), the equilibrium reward per unit times is given by $\alpha(\theta) = \pi(\theta) \tilde{r}$. Furthermore, if $z \in S$ is fixed, 
\begin{align*}
  \alpha(\theta) = \frac{\mathbb{E}_z^{\theta} \sum_{j=0}^{\tau(z)-1} r_1(\theta,Y_j)}{\mathbb{E}_z^{\theta} \sum_{j=0}^{\tau(z)-1} r_2(\theta,Y_j)}
\end{align*}
where $(Y_j: j\geq 0)$ is the discrete time Markov chain having transition matrix $R(\theta)$, $r_1(\theta,y)  = \tilde{r}(y)/\lambda(\theta,y)$ and $r_2(\theta,y) = 1/\lambda(\theta,y)$.
Hence,
\begin{align*}
  \alpha(\theta) = \frac{w(\theta,z,r_1(\theta))}{w(\theta,z,r_2(\theta))},
\end{align*}
so that
\begin{align*}
  \alpha'(\theta_0) = \frac{w'(\theta_0,z,r_1(\theta_0))+w(\theta_0,z,r'_1(\theta_0)) - \alpha(\theta_0)(w'(\theta_0,z,r_2(\theta_0))+w(\theta_0,z,r'_2(\theta_0)))}{w(\theta_0,z,r_2(\theta_0))}.
\end{align*}
Note that Proposition \ref{prop2} bounds the error in $w(\theta_0,z,r'_i(\theta_0))$ and Theorem \ref{thm2} bounds the error in $w'(\theta_0,z,r_i(\theta_0))$ for $i=1,2$. We can then similarly bound the error in $\alpha'(\theta)$ via an argument similar to that used in Section \ref{sec:errorbound_deriv}.

\section{Numerical Results}

In this section, we apply our gradient bounding method to two examples and provide numerical results.

\subsection{Single Queue}

We consider the $G/M/1$ queue (see p.279 of \citet{asmussen2003applied}), specifically the Markov chain $(X_n:n \geq 0)$ in which $X_n$ corresponds to the number-in-system just prior to the $(n+1)'st$ arrival epoch. We let $F$ be the cumulative distribution function for the inter-arrival times and let $\mu$ be the rate parameter associated with the exponentially distributed service times. Here, the state space $S = \{0,1,2,\ldots\}$. We have 
\begin{align*}
 X_{n+1} = \left[X_n + 1 - Z_{n+1}  \right]^+
\end{align*} 
where the $Z_i$'s are independent and identically distributed, and 
\begin{align*}
 \xi_k = \mathbb{P}(Z_1 = k) = \int_{[0, \infty)} e^{-\mu t} \frac{(\mu t)^k}{k!} F(dt)
\end{align*}
for $k \geq 0$.

We set $F$ to be the uniform distribution over the interval $[0,b]$ and $r(x) = x$ for $x \in S$. Assuming $\mu b >2$, the equilibrium distribution of $X$ is $\pi(n) = (1 - \beta) \beta ^n$ for $n \geq 0$, where $\beta = \beta (\mu) = 1 - \phi(\mu)/{\mu}$ and $\phi = \phi(\mu) \in [0,\mu]$ is the solution to the following equation:
\begin{align*}
  1 - \frac{\phi}{\mu} = \int_0^{\infty} e^{-\phi t} dF(t) = \frac{1 - e^{-\phi b}}{\phi b}.
\end{align*}
Moreover,  
\begin{align*}
 \xi_k = \frac{1}{ \mu  b} \left\{1 - \sum_{i=0}^k \frac{(\mu b)^i}{i!} e^{-\mu b} \right\}.
\end{align*}

Viewing the Markov chain as a function of $\mu$, we focus on taking derivatives with respect to $\mu$. We have
\begin{align*}
 \alpha(\mu) = \frac{\beta(\mu)}{1 - \beta(\mu)}, \alpha'(\mu) = \frac{\beta'(\mu)}{(1 - \beta(\mu))^2}
\end{align*}
where
\begin{align*}
  \beta'(\mu) &= \frac{\phi(\mu) - \mu \phi'(\mu)}{\mu^2},\\
  \phi'(\mu)  &= \left(b \cdot e^{-\phi b} + \frac{2b \phi}{\mu} - b \right)^{-1} b \frac{\phi^2}{\mu ^2}. 
\end{align*}
If we set $b = 1.5$ and $\mu_0 = 2$, we find that
\begin{align*}
  \alpha(\mu_0) = 1.2046186974, \alpha'(\mu_0) = -1.9530450987.
\end{align*}

To test our methodology, we set $K = \{0,1,2,\ldots,9 \}$ and take the truncation set $A_n = \{0,1,2,\ldots,  n\}$ for $n \in \mathbb{N}$.
If we choose $v(x) = 2 x^2$, then
\begin{align*}
&v(x) = 2 \mathbb{E}\left([x+1-Z_1]^+\right)^2 \leq 2 \mathbb{E}\left(x+1-Z_1\right)^2 = \\
& v(x) - r(x) + (4(1 - \mathbb{E}Z_1)+1) x + 2 (1 + \mathbb{E}Z^2_1 - 2 \mathbb{E}Z_1 ).  
\end{align*}
So $v(x) \leq v(x) - r(x) + c I(x \in K)$ for $c = 2 (1 + \mathbb{E}Z^2_1 - 2 \mathbb{E}Z_1 )$  as $4(1 - \mathbb{E}Z_1)+1 <0$.
In addition, we set $\tilde{v}(x) = x^4$.
Based on Remark \ref{rmk_1}, we set $w = e_2$ and $\nu = e_{\beta}$.
Further, we set $\tilde{\nu}(x)= x^4$ and $z = 0$.

Figure \ref{fig:fig1} shows the numerical accuracy of our method versus $A_n$ as $n$ varies. For this figure,
\begin{align*}
  lower\_rel = \frac{\alpha' - lb}{|\alpha'|}, upper\_rel = \frac{ub - \alpha'}{|\alpha'|}, gap\_rel = \frac{ub - lb}{|\alpha'|}
\end{align*}
where $ub, lb$ are the upper and lower bounds obtained from out gradient bounds method.
The figure indicates our method can provide highly accurate bounds for the gradient $\alpha'(\mu_0)$ with moderate size truncation sets.

\begin{figure}[htbp]
\centering
\includegraphics[scale=0.4]{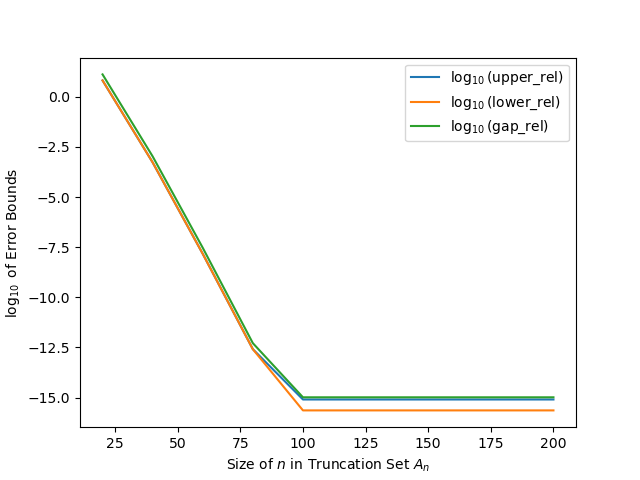}
\caption{ G/M/1 queue: Numerical accuracy of our gradient bounds method versus $n$}
\label{fig:fig1}
\end{figure}

\subsection{A Network of Queues}

Consider a Jackson network consisting of two stations with a single server at each station. The queues are indexed with $1,2$. The routing matrix of the network is 
\begin{align}
 R(\theta) =  \begin{bmatrix}
   1/5 & \theta \\
   1/4 & 1/8 
  \end{bmatrix}.
\end{align}
The service rate of the server at station $1$ is $\mu_1 = 3.0$, while the service rate for the server at station 2 is $\mu_2 = 3.0$. The exogenous arrival rates for the stations are $\lambda_1= 2/3, \lambda_2= 1$. Define $X(t) = (X_1(t),X_2(t))$ where $X_i(t)$ is the number of customers at station $i$ at time $t$. The process $X = (X(t): t \geq 0)$ is a Markov jump process with state space $S = \{x = (x_1,x_2): x_1, x_2\geq 0 \}$.
We set $\tilde{r}(x) = x_1 + x_2$. The function $\tilde{r}$ was defined in Section \ref{sec:extension_markov_jump_process}. 

We focus on taking derivatives with respect to $\theta$ and set $\theta_0 = 0.4$. We have
\begin{align*}
 \alpha(\theta) &= \frac{\rho_1(\theta)}{1 - \rho_1(\theta)} +\frac{\rho_2(\theta)}{1 - \rho_2(\theta)}, \\ 
 \alpha'(\theta) &= \frac{\rho'_1(\theta)}{(1 - \rho_1(\theta))^2} +\frac{\rho'_2(\theta)}{(1 - \rho_2(\theta))^2} 
\end{align*} 
where $\rho_i(\theta) = \gamma_i(\theta)/{\mu_i}$ for $i \in {1,2}$ and $\gamma_i(\theta)$ is the effective arrival rate for queue $i$. The effective arrival rates are the solution of the linear system of equations 
\begin{align*}
\begin{bmatrix} \gamma_1(\theta) \\ \gamma_2(\theta) \end{bmatrix} = \begin{bmatrix} 2/3 \\ 1 \end{bmatrix}+\begin{bmatrix} 1/5 & 1/4 \\\theta & 1/8 \end{bmatrix} \begin{bmatrix} \gamma_1(\theta) \\ \gamma_2(\theta) \end{bmatrix}.
\end{align*}
We get 
\begin{align*}
 \alpha(\theta_0) = 2.316614420063, \alpha'(\theta_0) = 4.38785487564.
\end{align*} 

We set $K = \{(x_1,x_2): (0.9667)x_1 + (0.6999)x_2 \leq 13.4666 \}$. 
Also we set the truncation set $A_n = \{(x_1,x_2)\in S: x_1 \leq n , x_2 \leq n \}$.
Moreover, for $r_1$ and $r_2$ (defined in Section \ref{sec:extension_markov_jump_process}), we choose the same Lyapunov functions  $v(x) = x_1^2 + 2 x^2_2$ and $\tilde{v}(x) = 60 x^3_1 + 60 x^3_2$.
Our choice of the set $K$ is the result of writing the Lyapunov inequality for function $v$. In particular for $x_1,x_2 \geq 1$,
\begin{align*}
(Q v) (x) =& -\tilde{r}(x) + (2(\lambda_1 - \mu_1 (1 - R_{11}) + \mu_2 R_{21} )+1) x_1 \\ &+ (4(\lambda_2 - \mu_2(1-R_{22}) + \mu_1 R_{12} ) + 1) x_2 \\
&+ (\lambda_1 + \mu_1(1-R_{11}) + \mu_2 R_{21}) + 2(\lambda_2 + \mu_2(1-R_{22}) + \mu_1 R_{12}).
\end{align*}
This set $K$ makes the required Lyapunov conditions mentioned above for both $v$ and $\tilde{v}$ valid.
Again based on Remark \ref{rmk_1}, we set $w = e_2$ and $\nu = e_{\beta}$. Further, we set $\tilde{\nu}(x) = 60 x^3_1 + 60 x^3_2$ and $z=(0,0)$.

Figure \ref{fig2} provides the numerical performance of our gradient bounding method as a function of $n$. Again, the figure indicates that our method can provide highly accurate bounds for the gradient $\alpha'(\theta_0)$ with moderate size truncation sets.

\begin{figure}[htbp]
\centering
\includegraphics[scale=0.4]{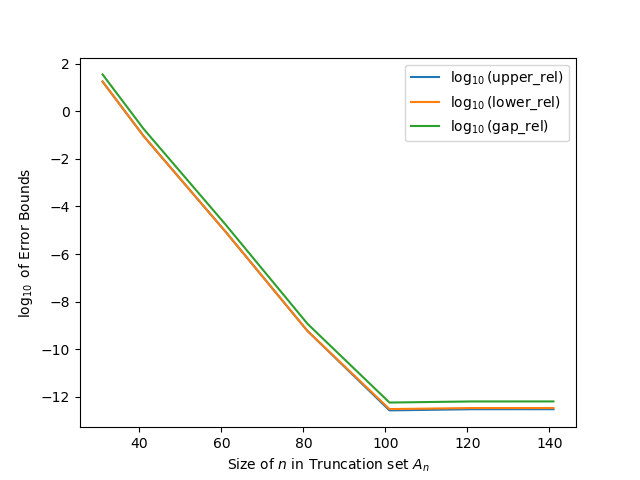}
\caption{Network of queues: Numerical accuracy of our gradient bounds  versus $n$.}
\label{fig2}
\end{figure}

\bibliographystyle{apalike}
\bibliography{papers}

\begin{thebibliography}{}

\bibitem[Asmussen, 2003]{asmussen2003applied}
Asmussen, S. (2003).
\newblock {\em Applied Probability and Queues}.
\newblock Springer.

\bibitem[Breiman, 1968]{breiman1968probability}
Breiman, L. (1968).
\newblock {\em Probability}.
\newblock Addison-Wesley.

\bibitem[Glasserman, 1992]{glasserman1992derivative}
Glasserman, P. (1992).
\newblock {Derivative estimates from simulation of continuous-time Markov
  chains}.
\newblock {\em Operations Research}, 40(2):292--308.

\bibitem[Glynn, 1987]{glynn1987likelihood}
Glynn, P.~W. (1987).
\newblock {Likelihood ratio gradient estimation: an overview}.
\newblock In {\em Winter Simulation Conference}, pages 366--375.

\bibitem[Glynn, 1990]{glynn1990likelihood}
Glynn, P.~W. (1990).
\newblock {Likelihood ratio gradient estimation for stochastic systems}.
\newblock {\em Communications of ACM}, 33(10):75--84.

\bibitem[Glynn and L'Ecuyer, 1995]{glynn1995likelihood}
Glynn, P.~W. and L'Ecuyer, P. (1995).
\newblock {Likelihood ratio gradient estimation for stochastic recursions}.
\newblock {\em Advances in Applied Probability}, 27(4):1019--1053.

\bibitem[Golub and Meyer, 1986]{golub1986using}
Golub, G. and Meyer, Jr, C.~D. (1986).
\newblock {Using the QR factorization and group inversion to compute,
  differentiate, and estimate the sensitivity of stationary probabilities of
  Markov chains}.
\newblock {\em SIAM Journal on Algebraic Discrete Methods}, 7(2):273--281.

\bibitem[Infanger and Glynn, 2022]{infanger2022new}
Infanger, A. and Glynn, P.~W. (2022).
\newblock {A new truncation algorithm for Markov chain equilibrium
  distributions with computable error bounds}.
\newblock {\em arXiv preprint arXiv:2208.14406}.

\bibitem[Infanger and Glynn, 2024]{infanger2024posteriori}
Infanger, A. and Glynn, P.~W. (2024).
\newblock {A posteriori error bounds for truncated Markov chain linear systems
  arising from first transition analysis}.
\newblock {\em Operations Research Letters}, 54:107106.

\bibitem[Kemeny and Snell, 1960]{kemeny1960finite}
Kemeny, J.~G. and Snell, J.~L. (1960).
\newblock {\em Finite Markov Chains}.
\newblock Von Nostrand.

\bibitem[Pflug, 1992]{pflug1992gradient}
Pflug, G.~C. (1992).
\newblock {Gradient estimates for the performance of Markov chains and discrete
  event processes}.
\newblock {\em Annals of Operations Research}, 39(1):173--194.

\bibitem[Rhee and Glynn, 2023]{rhee2023lyapunov}
Rhee, C.-H. and Glynn, P.~W. (2023).
\newblock {Lyapunov conditions for differentiability of Markov chain
  expectations}.
\newblock {\em Mathematics of Operations Research}, 48(4):2019--2042.

\bibitem[Sutton and Barto, 2018]{sutton2018reinforcement}
Sutton, R.~S. and Barto, A.~G. (2018).
\newblock {\em {Reinforcement Learning: An Introduction}}.
\newblock MIT Press.

\bibitem[Zheng and Glynn, 2024]{zheng2024regenerationbased}
Zheng, Z. and Glynn, P.~W. (2024).
\newblock {A regeneration-based a posteriori error bound for a Markov chain
  stationary distribution truncation algorithm}.
\newblock {\em Submitted for publication}.

\end{thebibliography}

\end{document}